\documentclass[14pt]{article}

\usepackage{mathtext}
\usepackage[T1,T2A]{fontenc}
\usepackage[utf8]{inputenc}
\usepackage[english]{babel}

\usepackage{amsfonts, amsmath, amssymb, amsthm, amscd}
\usepackage{mathtools}
\usepackage{ulem}
\usepackage{array}
\usepackage{geometry}
\usepackage{graphicx}
\usepackage{hyperref}
\usepackage{cancel}
\hypersetup{pdfstartview=FitH,  linkcolor=blue,urlcolor=urlcolor, citecolor=blue, colorlinks=true}

\theoremstyle{theorem}
\newtheorem*{theorem}{Theorem}

\newcommand{\Label}[1]{\label{#1}}

\usepackage{fancyhdr}
\newcommand\RR{\mathbb{R}}

\newcommand{\ZZ}{\mathbb{Z}}
\newcommand{\al}{\alpha}
\newcommand{\Qp}{\mathbb Q_p}

\newcommand{\be}{\beta}

\newcommand{\OO}{\Omega}
\begin{document}

\title{Monotonicity Property of Non-Archimedean Heat Equation}

%%%%% authors:
\author{Alexandra V. Antoniouk\\
Institute of Mathematics, National Academy of Sciences of Ukraine,\\ Tereshchenkivska 3, Kyiv, 01024 Ukraine\\
American University Kyiv, Poshtova Sq 3, 04070, Kyiv, Ukraine\\
E-mail: antoniouk.a@gmail.com
\and
Anatoly N. Kochubei\\
Institute of Mathematics, National Academy of Sciences of Ukraine,\\ Tereshchenkivska 3, Kyiv, 01024 Ukraine \\
E-mail: kochubei@imath.kiev.ua}

\date{}

\maketitle

\bigskip

%%%%%%%%%%%%%%%%%%%%%%%%%%%%%%%%%%%%%%%%%%%%%%%%%%%%%%
\begin{abstract}
We prove the monotone dependence of the heat kernel corresponding to the Vladimirov-Taibleson non-Archimedean fractional differentiation operator $D^\alpha$ on the order $\alpha$.
\end{abstract}

\section{Introduction} \Label{sec1}

Let $K$ be a non-Archimedean local field, $\OO=\left\{ x\in K^n:\ \| x\|_{K^n}\le q^N\right\}$ be an ultrametric ball. Here $N\in \mathbb Z$, $\|(x_1,\ldots x_n)\|_{K^n}=\max\limits_{1\le j\le n}|x_j|_K$, $q$ is the residue cardinality of $K$
(for the basic notions regarding local fields see Section 2 below and \cite{FV, K2001,Weil}).  The best known example of a non-Archimedean local field is the field of $p$-adic numbers; here $p$ is a prime number.

A model example of a pseudo-differential operator acting on real- or complex-valued functions on $\OO$ is the Vladimirov-Taibleson fractional differential operator

\begin{equation}\Label{1-1}\
\big(D^{\al,n} u\big)(x)=c_{n,\al} \int\limits_{K^n} \frac{u(x)-u(y)}{\Vert x-y\Vert_{K^n}^{n+\al}}\, dy, \quad x\in \OO,
\end{equation}
where $dy$ is the Haar measure, $c_{n,\al}=\dfrac{q^{\al}-1}{1-q^{-\al-n}}, \quad \al >0,$ and a function $u$ is extended onto $K^n$ by a nonlocal Dirichlet condition, that is (for the homogeneous case) $u(y)=0$, as $y\in \Omega^c=K^n\setminus \Omega$. The precise understanding of the integral in \eqref{1-1} depends on smoothness assumptions for $u$; the simplest case is that of its boundedness and local constancy when $u(y)=u(x)$, as $y$ belongs to a neighbourhood of a point $x$. Note that $K$ is totally disconnected, and, in contrast to classical analysis, the set of such functions is quite rich.

The choice of the constant $c_{n,\al}$ is motivated by the representation of $D^{\al,n} $ in terms of the Fourier analysis on $K^n$; see \cite{K2001,VVZ}.

The operator \eqref{1-1} and related equations have been studied by many authors, see the monographs \cite{VVZ,K2001,AKS,T,KKZ,Z2016,Z2025} and many recent papers. Its form, especially its nonlocality, resembles that of the fractional Laplacian $(-\Delta)^{\alpha/2}$ of real analysis, though the actual meaning of the objects of a non-Archimedean theory is quite different.

In this note we consider an analog of the first boundary value problem for the heat equation, that is
\begin{equation}\Label{1-2}\
\left\{
\begin{aligned}
\frac{\partial u(t,x)}{\partial t}+\left(D^{\al,n}u\right)(t,x)=0,\quad x\in \Omega,\ t>0;\\
u(t,x)=0,\quad x\in \Omega^c;\\
u(0,x)=\varphi (x),\quad x\in \Omega.
\end{aligned}
\right.
\end{equation}

The non-Archimedean ball $\OO$ is simultaneously open and closed, so that it has no topological boundary. Though one cannot speak, in the classical style, about ``boundary value problems'', there is a well-developed theory of parabolic equations of this kind. See \cite{K2001,AKK2020,Z2016,Z2025,K2018,AKN,BGPW} and references therein. Our aim here is to prove a monotone dependence, in a certain sense, of a solution of (2) on $\al$. A similar problem for the fractional Laplacian was studied in \cite{DGV}.

\section{Local fields \cite{FV,K2001,Weil}}
A (non-Archimedean) local field is a non-discrete totally disconnected locally compact topological field.

An important related notion is that of a field extension. If $k$ is a subfield of a field $K$, then $K$ is called an algebraic extension of $k$. In this situation, the field $K$ is a vector space over $k$, and we call $K$ a finite extension, if this vector space is finite-dimensional. This dimension is called a degree of this extension. A local field $K$ is isomorphic either to a finite algebraic extension of the field $\Qp$ of $p$-adic numbers, if $K$ has characteristic $0$, or to the field of formal Laurent series with coefficients from a finite field, if $K$ has a positive characteristic. By Ostrowski's theorem, $\Qp$ is the only possible alternative to $\RR$ as a completion of the field of rational numbers. This shows the importance of non-Archimedean mathematics as one of the principal branches of mathematical science.

Any local field is endowed with absolute value $\vert \cdot \vert_K$, such that: 1) $\vert x\vert_K=0$, if and only if $x=0$; 2) $\vert xy\vert_K = \vert x\vert_K\cdot\vert y \vert_K$; 3) $\vert x+y\vert_K\leq \max (\vert x\vert_K, \vert y\vert_K)$. The last property is called the ultrametric inequality. It implies the equality $\vert x+y\vert_K=\vert x\vert_K$, if $\vert y\vert_K < \vert x\vert_K$.

The ring of integers $O = \{x\in K\colon \vert x\vert_K\leq 1\}$ contains an ideal $P=\{x\in K\colon \vert x\vert_K <1\}$. There exists such an element $\be$ that $P=\be O$. The quotient ring $\bar{K}=O/P$ is actually a finite field. The absolute value is called normalized, if $\vert \be\vert_K = q^{-1}$, where $q$ is the cardinality of the finite field $O/P$. Unless stated otherwise, this property is assumed. A normalized absolute value takes exactly the values $q^m$, $m\in\ZZ$.

Let $L$ be a finite extension of a local field $K$. An operator  of multiplication in $L$ by an element $\xi$ can be considered as a linear operator on a $K$-vector space. If the linear function $\xi \mapsto \operatorname{Tr}(\xi)$ does not vanish identically, then the extension is called separable. Here $\operatorname{Tr}$ is the trace of a linear operator in a finite-dimensional vector space. All finite extensions of the field of characteristic zero are separable. On the other hand, the notion of separability makes sense also for the finite fields $\bar K, \bar L$.

A finite extension $L$ of a local field $K$ is called unramified, if $\bar L$ is a separable extension of $\bar K$ of the same degree, as $L$ over $K$. Any local field $K$ has a unique (up to the isomorphism) unramified extension of each given degree $n\geq 2$. The cardinality of the residue field $\bar L$ in this extension equals $q^n$ where $q$ is the residue cardinality for $K$.

The unramified extension of degree $n$ has, as a $K$-vector space, a canonical basis consisting of representatives of a basis in $\bar L$ over $\bar K$. Let $x\in L$ have the coefficients $x_1,\ldots,x_n \in K$ of its expansion with respect to the canonical basis. Then the normalized absolute value $|x|_L$ has the representation \cite{T1976}:
\begin{equation*}\Label{28}\
|x|_L=\Big(\max\limits_{1\leq j\leq n} |x_j|_K\Big)^n.
\end{equation*}
The comparison of $|x|_L$ with another absolute value on $L$, $\| x\|_L=|x|^{1/n}_L\equiv\max\limits_{1\leq j \leq n}|x_j|_K$, which extends the absolute value from $K$, shows that the above canonical basis is an orthogonal basis (in the non-Archimedean sense \cite{Sc1984}) in $L$ considered as Banach space with the norm $\|\cdot \|_L$. Therefore this basis defines an isometric linear isomorphism  $U:\ L\to K^n$,  $Ux=(x_1,\ldots ,x_n)$.

The above isomorphism identifies the operator $D^{\al,n}$ defined by (1)  with the operator
\begin{equation}
\label{4.2}
\left( D_L^\gamma u\right) (x)=\frac{1-q^{n\gamma}}{1-q^{-n(\gamma +1)}}\int\limits_L\frac{u(z)-u(x)}{|z-x|_L^{\gamma +1}}\,dz,\quad \gamma =\dfrac{\alpha}n;
\end{equation}
see \cite{K2021} for the details.

Thus, a multi-dimentional operator over a local field $ K$ is equivalent to a one-dimensional operator over a bigger field, the unramified extension $L$. Such properties are absent in real analysis (except some two-dimensional operators admitting complex representation).

\section{The non-Archimedean heat equation}

Let us consider the problem (2).  We begin with the one-dimensional case ($n=1$).

Using the ultrametric property of the absolute value $|\cdot |_K$, we see that for $x\in \OO$,
\begin{equation}
\left( D^{\al,1}u\right) (t,x)=\left( \Delta_\OO^\al\right) u(t,x)+c_{1,\al}u(t,x)\int\limits_{|y|_K>q^N}\frac{dy}{|y|_K^{\al +1}}-c_{1,\al}\int\limits_{|y|_K>q^N}\frac{u(t,y)dy}{|y|_K^{\al +1}}
\end{equation}
where the operator
$$
\left( \Delta_\OO^\al\right) u (t,x)=c_{1,\al}\int\limits_\OO \frac{u(t,x)-u(t,y)}{|x-y|_K^{\al +1}}\,dy,\quad x\in \OO,
$$
called a local Vladimirov-Taibleson operator in \cite{Br}, is an analog of the regional fractional operator \cite{Gu}. The second term on the right in (4) equals $\lambda_N^{(\al)}u(t,x)$ where
$$
\lambda_N^{(\al)}=\dfrac{q-1}{q(1-q^{-\al-1})}q^{-\al N}
$$
(see \cite{K2018}). The third term on the right in (4) equals 0, due to the ``boundary condition'' in (2).

Thus, the problem (2) is reduced to a kind of a Cauchy problem (on the open and closed set $\OO$) for the equation
\begin{equation}
\frac{\partial u(t,x)}{\partial t}+\left(\Delta_\OO^\al u\right)(t,x)+\lambda_N^{(\al)}u(t,x)=0.
\end{equation}
Setting $u(t,x)=e^{-\lambda_N^{(\al)}t}v(t,x)$  we come to the equation
\begin{equation}
\frac{\partial v(t,x)}{\partial t}+\left(\Delta_\OO^\al v\right)(t,x)=0,\quad x\in \OO,
\end{equation}
the simplest case of a heat equation on a compact $p$-adic manifold \cite{Br}.

An orthonormal basis in $L^2(\OO)$ can be constructed of Kozyrev's $p$-adic wavelets $\psi:\ \OO \to \mathbb C$, which are eigenfunctions of $\Delta_\OO^\al$ corresponding to the eigenvalues \cite{Koz,Br}
\begin{equation}
\rho_\psi = c_{1,\al}\left\{ \int\limits_{\OO\setminus B}|y|_K^{-\al -1}dy+(\operatorname{mes}B)^{-\al}\right\},
\end{equation}
where $\psi$ is supported on a subball $B\subset \OO$. Note that the eigenfunctions $\psi$ do not depend on $\al$.

It is shown in \cite{Br} that the heat kernel function (a fundamental solution of the Cauchy problem) corresponding to the equation (6) has the form
\begin{equation}
H_\al (t,x,y)=1+\sum\limits_\psi e^{-t\rho_\psi}\psi (x)\overline{\psi (y)},\quad x\ne y\in \OO,t>0,
\end{equation}
where $\psi$ runs the set of all the above wavelets. If $\al >1$, $H_\al$ exists also for $x=y$, $t>0$ and $0<H(t,\cdot,\cdot)\in L^2(\OO\times \OO)$.

Returning to the equation (5) we get the corresponding heat kernel
\begin{equation}
G_\al (t,x,y)=e^{-\lambda_N^{(\al)}t}H_\al (t,x,y)
\end{equation}
where $H_\al$ is given by (8). The above constructions make sense also for the multi-dimensional situation, with an appropriate constant in (9).

\section{Monotonicity}

Now we formulate the main result of this paper.

\begin{theorem}
Suppose that $N\le -1$, $n<\al_0 <\al_1$. Then
$$
G_{\al_0}(t,x,x)>G_{\al_1}(t,x,x)
$$
for all $t>0$, $x\in \OO$.
\end{theorem}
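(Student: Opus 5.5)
The plan is to reduce to $n=1$, where the diagonal of $G_\al$ becomes an explicit series with explicit $\al$-dependence, and then show that every term is strictly decreasing in $\al$. Since the statement concerns only $G_\al(t,x,x)$, the eigenfunctions (which do not depend on $\al$) enter only through the fixed weights $|\psi(x)|^2\ge 0$. It therefore suffices to understand how the exponent $\lambda_N^{(\al)}t$ and the exponents $t\rho_\psi$ depend on $\al$.

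\emph{Step 1 (reduction to $n=1$).} Via the isometry $U:L\to K^n$ with $L$ the unramified extension of degree $n$, the operator $D^{\al,n}$ becomes $D_L^\gamma$ with $\gamma=\al/n$, as in (3). The ball $\OO$ becomes $\{|z|_L\le q_L^{N}\}$ with $q_L=q^n$, so the hypothesis $N\le -1$ is preserved. The hypothesis $n<\al_0<\al_1$ becomes $1<\gamma_0<\gamma_1$. Hence it suffices to treat $n=1$, $\al>1$, with $q$ replaced by $q_L$ where necessary.

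\emph{Step 2 (explicit eigenvalues).} Let $\psi$ be a wavelet supported on a subball $B\subset\OO$ of radius $q^m$, so $m\le N$. Then $\psi$ is also an eigenfunction of the full operator $D^{\al,1}$ on $K$, with the known Kozyrev eigenvalue $q^{\al(1-m)}$. Because $\psi$ vanishes off $\OO$, the decomposition (4) gives $D^{\al,1}\psi=\Delta^\al_\OO\psi+\lambda_N^{(\al)}\psi$. Therefore
\[
\rho_\psi=q^{\al(1-m)}-\lambda_N^{(\al)} .
\]
As a cross-check, I would confirm this directly from (7) by computing $\int_{\OO\setminus B}|y|^{-\al-1}dy=(1-q^{-1})\sum_{k=m+1}^{N}q^{-k\al}$ and summing the geometric series. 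Inserting the formula into (8) and (9) cancels the factor $e^{-\lambda_N^{(\al)}t}$ against the wavelet terms:
\[
G_\al(t,x,x)=e^{-\lambda_N^{(\al)}t}+\sum_\psi e^{-tq^{\al(1-m_\psi)}}|\psi(x)|^2 .
\]
The series converges for every $t>0$. The wavelets with $|\psi(x)|^2\neq 0$ are supported on balls containing $x$; there are $q-1$ of them per ball, each with $|\psi(x)|^2=q^{-m}$. So the series is bounded by $\sum_{m\le N}(q-1)q^{-m}e^{-tq^{\al(1-m)}}<\infty$, and it is dominated uniformly for $\al$ in compact subsets of $(1,\infty)$.

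\emph{Step 3 (monotonicity).} For $m\le N\le -1$ we have $1-m\ge 2$, so $q^{\al(1-m)}$ is strictly increasing in $\al$. Hence every term of the series is nonincreasing in $\al$. For the first term, note that
\[
\frac{d}{d\al}\log\lambda_N^{(\al)}=|N|\log q-\frac{q^{-\al-1}\log q}{1-q^{-\al-1}} .
\]
This is positive because $|N|\ge1$ and $q^{-\al-1}<1/2$. So $e^{-\lambda_N^{(\al)}t}$ is strictly decreasing in $\al$, which gives the strict inequality $G_{\al_0}(t,x,x)>G_{\al_1}(t,x,x)$.

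The main obstacle is Step 2: justifying the identity $\rho_\psi=q^{\al(1-m)}-\lambda_N^{(\al)}$, with the correct normalisation of $c_{1,\al}$ and of the measure, and checking that it survives the transfer to $L$. Concretely, $D^{\al,n}$ and $D^\gamma_L$ must produce eigenvalues $q_L^{\gamma(1-m)}=q^{\al(1-m)}$ and the matching $\lambda_N$. I would attack this first by direct computation from (7) and (4).
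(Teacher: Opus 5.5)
Your proof is correct, but it uses a different decomposition of the diagonal kernel than the paper.

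\textbf{The paper's route.} The paper keeps $G_\alpha=e^{-\lambda_N^{(\alpha)}t}H_\alpha$ as a product of two positive factors and shows each one decreases in $\alpha$:
\begin{itemize}
\item $\lambda_N^{(\alpha)}$ increases in $\alpha$;
\item $\rho_\psi$ increases in $\alpha$, because $c_{1,\alpha}$ increases (a separate derivative computation using $q\ge 2$) and the brace in (7) increases termwise when $N\le -1$.
\end{itemize}

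\textbf{Your route.} You use the identity $\rho_\psi+\lambda_N^{(\alpha)}=q^{\alpha(1-m)}$, the whole-space Kozyrev eigenvalue. This identity does follow from (7). There $\int_{\Omega\setminus B}|y|_K^{-\alpha-1}dy=(1-q^{-1})\sum_{k=m+1}^{N}q^{-k\alpha}$. Adding $\lambda_N^{(\alpha)}=c_{1,\alpha}(1-q^{-1})\sum_{k>N}q^{-k\alpha}$ gives exactly $q^{\alpha(1-m)}$. The constant in (3) is precisely $c_{1,\gamma}$ for residue cardinality $q^n$, so the identity transfers to $L$ unchanged. Your ``main obstacle'' is therefore not an obstacle.

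\textbf{Comparison.}
\begin{itemize}
\item Your identity turns $G_\alpha(t,x,x)$ into the sum $e^{-\lambda_N^{(\alpha)}t}+\sum_\psi e^{-tq^{\alpha(1-m_\psi)}}|\psi(x)|^2$. Each wavelet term is trivially monotone, so you never need the monotonicity of $c_{1,\alpha}$.
\item In your version the wavelet terms only need $m\le 0$. So $\lambda_N^{(\alpha)}$ is the only place where $N\le -1$, rather than $N\le 0$, is really used.
\item The paper's route works directly with the regional spectrum (7) and needs no information about the whole-space operator.
\end{itemize}

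\textbf{A correction your computation supplies.} Your log-derivative of $\lambda_N^{(\alpha)}$ is the correct one. The paper's displayed formula for $\partial_\alpha\lambda_N^{(\alpha)}$ carries a spurious factor $q^{-\alpha-1}$ on the $N$-term. As printed, its bracket $\frac{1}{1-q^{-\alpha-1}}+N$ is positive at $N=-1$, which would give the wrong sign. The correct derivative is $-\frac{q-1}{q}\frac{q^{-\alpha N}\log q}{1-q^{-\alpha-1}}\bigl(N+\frac{q^{-\alpha-1}}{1-q^{-\alpha-1}}\bigr)$. This is positive for $N\le -1$, so the paper's conclusion stands, and your argument supplies the right justification for it.
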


{\bf Proof.} We begin with the case where $n=1$. We have
$$
\frac{\partial}{\partial \al}\lambda_N^{(\al)}=-\frac{q-1}q q^{-\al N}\frac{q^{-\al -1}\log q}{1-q^{-\al -1}}\left( \frac1{1-q^{-\al -1}}+N\right)>0,
$$
so that $\lambda_N^{(\al)}$ increases in $\al$, as $\al >1$. Thus $\exp (-\lambda_N^{(\al)}t)$ decreases in $\al$.

Similarly,
$$
\frac{\partial}{\partial \al}c_{1,\al}=\frac{q^\al (1-q^{-\al -1})\log q-(q^\al-1)q^{-\al -1}\log q}{ (1-q^{-\al -1})^2}
$$
and
$$
q^\al (1-q^{-\al -1})-(q^\al -1)q^{-\al -1}=q^\al-2q^{-1}+q^{-\al -1}=q^{-1}(q^{\al +1}-2)+q^{-\al -1}>0,
$$
since $q\ge 2$, so that $c_{1,\al}$ increases in $\al$. It follows from our assumptions regarding $\OO$ that the expression in braces in (7) increases in $\alpha$. Thus the eigenvalues $\rho_\psi$ increase, while the expressions $\exp\{-t\rho_\psi\}$ decrease in $\al$, which implies the decrease of the heat kernel.

Turning to the multi-dimensional case, we notice that the isometric isomorphism $U:\ L\to K^n$ transforms the heat kernel of the operator over $K^n$ into a similar one-dimensional operator over the unramified
extension $L$, so that the multi-dimensional case is a consequence of the one-dimensional one considered for a bigger field. $\blacksquare$

\bigskip
\section {Acknowledgements}
The authors are grateful to the anonymous referee for helpful remarks. The first author acknowledges the funding support in the framework of the project ``Spectral Optimization: From Mathematics to Physics and Advanced Technology'' (SOMPATY) received from the European Union’s Horizon 2020 research and innovation programme under the Marie Skłodowska-Curie grant agreement No 873071. The work of the second author was supported by a grant from the Simons Foundation (1030291,A.N.K.).

 %%%%%%%%%%%%%%%%%%%%%%%%%%%%%%

\end{document}